\numberwithin{equation}{section}
\newtheorem{theorem}{Theorem} 
\newtheorem{lemma}[theorem]{Lemma}
\theoremstyle{remark}
\def\be{\begin{equation}}
\def\ee{\end{equation}}
\def\ve{\varepsilon}
\begin{document}
\Large
\title{On a homogenization problem}
\date{\today}
\author{J.~Bourgain}
\address{J.~Bourgain, Institute for Advanced Study, Princeton, NJ 08540}
\email{bourgain@math.ias.edu}
\thanks{The author was partially supported by NSF grants DMS-1301619}
\begin{abstract}
We study a homogenization question for a stochastic divergence type operator.
\end{abstract}

\maketitle

\section
{Introduction and Statement}

Let $\{\sigma_x(\omega); x\in\mathbb Z^d\}$ be i.i.d., $\mathbb E[\sigma_x]=0$ and assume moreover,
$$
\Vert \sigma_x\Vert_\infty \leq C.
\eqno{(1.1)}
$$
Consider the random operator
$$
L_\omega =-\Delta+\delta \nabla^*\sigma \nabla
\eqno{(1.2)}
$$
where $\nabla f(x) =f(x+1)-f(x)$ for $f$ on $\mathbb Z^d$.

Consider the stochastic equation
$$
L_\omega u_\omega =f.
\eqno{(1.3)}
$$
Formally we have
$$
\langle u_\omega\rangle= \langle L_\omega^{-1}\rangle f \ \text { and } \ A \langle u_\omega\rangle =f
\eqno{(1.4)}
$$
with
$$
A=\langle L_\omega^{-1} \rangle ^{-1}.
\eqno{(1.5)}
$$
We prove the following

\noindent
{\bf Theorem.}
With the above notation, given $\ve>0$, there is $\delta_0>0$ such that for $|\delta| <\delta_0$, $A$ has the form
$$
A =\nabla^* (1+K_1) \nabla
\eqno{(1.6)}
$$
with $K_1$ given by a convolution operator with symbol satisfying
$$
\hat K_1 (\xi)= O(\delta|\xi|^{d-\ve}).
\eqno{(1.7)}
$$
\newpage

\noindent
{\bf Remarks}

\begin{enumerate}
\item
This paper is closely related to a note of M.~Sigal \cite {S}, where the exact same problem is considered.
In \cite {S} an asymptotic expansion for $K_1$ is given and (1.7) verified up to the leading order.
What we basically manage to do here is to control the full series.
The argument is rather simple, but contains perhaps some novel ideas that may be of independent interest in the 
study of the averaged dynamics of stochastic PDE's.
\medskip

\item The author is grateful to T.~Spencer for bringing the problem to his attention and a few preliminary discussions.
\end{enumerate}

\section {The Expansion}

We briefly recall the derivation of the multi-linear expansion for $K_1$ established in \cite{S}.
Denote $b=\delta\sigma, P=\mathbb E, P^\bot = 1-\mathbb E$.
Using the Feshback-Shur map to the block decomposition
$$
\begin{pmatrix} (P, P) & (P^\bot, P)\\  (P, P^\bot) & (P^\bot, P^\bot)\end{pmatrix}
$$
we obtain
$$
PL^{-1}P= \big(PLP-PLP^\bot (P^\bot LP^\bot -io)^{-1} P^\bot LP \big)^{-1}
$$
Since $PLP=\Delta P, PLP^\bot =P\nabla^*b\nabla P^\bot, P^\bot LP= P^\bot \nabla^* b\nabla P$, we obtain
$$
(-\Delta P -\nabla^* Pb\nabla (P^\bot LP^\bot)^{-1} \nabla^*b  \nabla P)^{-1}.
\eqno{(2.1)}
$$
Next, $P^\bot LP^\bot = (-\Delta)\big( 1+ (-\Delta)^{-1} \nabla^*P^\bot b\nabla\big)$ and we expand
$$
(P^\bot LP^\bot)^{-1} =\Big[1-(-\Delta)^{-1} \nabla^* P^\bot b\Big(\sum_{n\geq 0} (-1)^n(KP^\bot b)^n\Big) \nabla P^\bot\Big]
(-\Delta)^{-1}
\eqno{(2.2)}
$$
where we denoted $K$ the convolution singular operator
$$
K=\nabla (-\Delta)^{-1} \nabla^*.
\eqno{(2.3)}
$$
Substitution of (2.2) in (2.1) gives
$$
\langle \nabla^* b\nabla (P^\bot LP^\bot)^{-1}  \nabla^* b\nabla\rangle =\sum_{n\geq 1}(-1)^{n+1} \nabla^* \langle b(KP^\bot b)^n\rangle \nabla.
\eqno{(2.4)}
$$
Hence
$$
\langle L_\omega^{-1}\rangle =\big(-\Delta + (2.4)\big)^{-1}
$$
and
$$
A=-\Delta + (2.4) =\nabla^* \big(1+K_1)\nabla
$$
with
$$
K_1=\sum_{n\geq 1} (-1)^{n+1} \langle b(KP^\bot b)^n\rangle.
\eqno{(2.5)}
$$
Remains to analyze the individual terms in (2.5).

\section
{A deterministic inequality}

Our first ingredient in controlling the multi-linear terms in the series (2.5) is the following (deterministic) bound on composing singular 
integral and multiplication operators.

\begin{lemma}
Let $K$ be a (convolution) singular integral operator acting on $\mathbb Z^d$ and $\sigma_1, \ldots, \sigma_s \in \ell^\infty (\mathbb Z^d)$.
Define the operator
$$
T = K\sigma_1 K\sigma_2 \cdots K\sigma_s.
\eqno{(3.1)}
$$
Then $T$ satisfies the pointwise bound
$$
|T(x_0, x_s)|<|x_0-x_s|^{-d+\ve} (C\ve^{-1})^s \prod^s_1 \Vert\sigma_j\Vert_\infty
\eqno{(3.2)}
$$
for all $\ve>0$.
\end{lemma}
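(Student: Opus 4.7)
My plan is a dyadic decomposition of $K$ combined with Young's inequality applied to convolutions of positive kernels, followed by a careful sum over scale configurations.

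Decompose $K=\sum_{j\ge 0}K_j$, where the kernel of $K_j$ is $K$ localized to the annulus $\{|x|\sim 2^j\}$ (with a smooth cutoff). Since $|K(x)|\lesssim |x|^{-d}$, each piece satisfies $\|K_j\|_\infty\lesssim 2^{-jd}$ and $\|K_j\|_1\lesssim C$ uniformly in $j$. Correspondingly expand
$$T=\sum_{j_1,\ldots,j_s\ge 0}K_{j_1}\sigma_1 K_{j_2}\sigma_2\cdots \sigma_{s-1}K_{j_s}.$$
For a fixed tuple $\vec j=(j_1,\ldots,j_s)$, the kernel of the summand is pointwise dominated by $\prod_\ell\|\sigma_\ell\|_\infty$ times the positive convolution $(|K_{j_1}|*\cdots*|K_{j_s}|)(x_0-x_s)$. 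Taking the factor realizing the maximum scale $j_*:=\max_i j_i$ in $L^\infty$ and the remaining $s-1$ factors in $L^1$, Young's inequality bounds this convolution in sup norm by $C^s 2^{-j_* d}$, with support contained in $\{|z|\lesssim s\cdot 2^{j_*}\}$ by the support properties of each $K_{j_i}$.

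Hence, writing $r=|x_0-x_s|$, only $\vec j$ with $2^{j_*}\gtrsim r/s$ contribute, and using $\#\{\vec j:\max_i j_i=J\}\le s(J+1)^{s-1}$,
$$|T(x_0,x_s)|\lesssim C^s\prod_\ell\|\sigma_\ell\|_\infty\sum_{J\ge \log_2(r/s)}s(J+1)^{s-1}2^{-Jd},$$
whose dominant term is $\sim (r/s)^{-d}(\log r)^{s-1}$. The elementary inequality $(\log r)^{s-1}\le\bigl((s-1)/(e\ve)\bigr)^{s-1}r^\ve$ then absorbs the logarithmic loss into an extra $r^\ve$, producing the claimed decay $|x_0-x_s|^{-d+\ve}$.

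The main technical obstacle is arranging the bookkeeping so that the constant ends up in the form $(C\ve^{-1})^s$ rather than the weaker $(Cs\ve^{-1})^s$ that the naive execution above yields; the extra $s^s$ has to be eliminated. I expect this comes from exploiting the cancellation $\int K_j=0$ to obtain an additional gain $2^{-|j_i-j_{i+1}|}$ when consecutive scales are well-separated, which effectively restricts the scale sum to tuples with comparable $j_i$'s and cuts the combinatorial count from $s(J+1)^{s-1}$ to $\mathrm{poly}(J)$. Verifying that the intervening multiplications by the $\sigma_\ell$ do not entirely destroy this cancellation is the most delicate step and the place where I would expect to spend the bulk of the effort.
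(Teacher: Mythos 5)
Your dyadic approach is genuinely different from the paper's, and you have correctly identified where it breaks: the naive execution produces an extra factor growing like $s^{s-1}$ (equivalently, the $(\log r)^{s-1}$ from counting scale tuples, converted to $r^\ve$ at a cost $\sim(s/\ve)^{s-1}$). This is fatal for the application, since in the series (2.5) the $n$-th term carries only $\delta^{n+1}$, and $\delta^{n+1}(Cn/\ve)^n$ does not go to zero for any fixed $\delta>0$; one really needs $(C\ve^{-1})^s$ with no $s^s$.

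The proposed repair via $\int K_j=0$ is unlikely to succeed. The moment you multiply by an arbitrary $\sigma_\ell\in\ell^\infty$, the mean-zero/oscillation structure of $K_j$ is destroyed: $K_j\sigma_\ell$ is essentially an arbitrary kernel at that scale with no residual cancellation to pair against the neighboring $K_{j_{\ell\pm1}}$. One would need some smoothness of $\sigma_\ell$ to retain a gain of the form $2^{-|j_i-j_{i+1}|}$, and the lemma assumes only $\ell^\infty$. So the "almost paraproduct" restriction to comparable scales that you are counting on is not available here, and I do not see how to close the argument on this route.

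The paper's proof avoids the combinatorial loss in a more structural way. It uses the Calder\'on--Zygmund operator bound $\Vert K\Vert_{p\to p}<c/(p-1)$ for $1<p\le 2$, so that $\Vert T\Vert_{p\to p}$ and $\Vert T^*\Vert_{p\to p}$ are at most $(c/(p-1))^s$ \emph{with no sum over configurations}: all the $s$-fold bookkeeping is absorbed into a single constant raised to the $s$-th power. The position-space decay is then extracted with one, not $s$, application of H\"older: pick the largest jump $|x_i-x_{i+1}|\sim R$ in the chain $x_0\to\cdots\to x_s$, factor $T$ through that jump, bound the two outer pieces via (3.5) in $\ell^p$ (rows/columns), bound $|K(x_i-x_{i+1})|\lesssim R^{-d}$, and pay only one $\ell^{p'}$ volume factor $(sR)^{2d(p-1)}$. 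Taking $p-1\sim\ve$ turns that into $(sR)^{O(\ve)}$ and yields $(C\ve^{-1})^s s^{O(d)} R^{-d+\ve}$, where $s^{O(d)}\le C_d^s$ is harmless. In short, the $L^p$-blowup estimate replaces your scale decomposition and eliminates the $s^s$ at the source.
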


\begin{proof}
Firstly, recalling the well-known bound
$$
\Vert K\Vert_{p\to p}< \frac c{p-1} \ \text { for } \ 1< p\leq 2
\eqno{(3.3)}
$$
and normalizing $\Vert\sigma_j\Vert_\infty =1$, we get
$$
\Vert T\Vert_{p\to p} +\Vert T^*\Vert_{p\to p}< \Big(\frac c{p-1}\Big)^s.
\eqno{(3.4)}
$$
In particular
$$
\max_x \Big(\sum_y |T(x, y)|^p\Big)^{\frac 1p}+\max_y \Big(\sum_x|T(x, y)|^p\Big)^{\frac 1p} <\Big(\frac c{p-1}\Big)^s.
\eqno{(3.5)}
$$
Next, write
$$
T_s (x_0, x_s)=\sum_{x_1, \ldots, x_{s-1}} K(x_0-x_1)\sigma_1(x_1) K(x_1-x_2)\sigma_2(x_2) \cdots K(x_{s-1} -x_s)
\sigma_s (x_s).
\eqno{(3.6)}
$$

Specify $R\gg 1$ and $0\leq i<s$ satisfying
$$
|x_i -x_{i+1}| \sim R
\eqno{(3.7)}
$$
$$
\max_j|x_j -x_{j+1}|\lesssim R.
\eqno{(3.8)}
$$
In particular $|x_0 -x_s|\lesssim sR$.
The corresponding contribution to (3.6) may be bounded by
$$
\sum_{\substack {x_i, x_{i+1}\\ |x_i -x_{i+1}|\sim R}}
|T^{(*)}_{i} (x_0, x_i)| \ |K(x_i-x_{i+1})| \ | T^{(*)}_{s-1-i}(x_{i+1}-x_s)| 
\eqno{(3.9)}
$$
with $T^{(*)}_i$ obtained from formula (3.6) with additional restriction (3.8).
The bound (3.5) also holds for $T^{(*)}_i$.
Since $|K(z)|< |z|^{-d}$ (where we denote $| \ |=| \ |+1$), it follows from (3.5), (3.7), (3.8) and 
H\"older's inequality that
$$
\begin{aligned}
(3.9) &\leq\Big(\frac c{p-1}\Big)^s \Big(\sum_{x_i, x_{i+1}, |x_0-x_i|< sR, |x_0-x_{i+1}| <s R} 1\Big)^{1/p'} R^{-d} \quad \Big(p'=\frac p{p-1}\Big)\\[12pt]
&< \Big(\frac c{p-1}\Big)^s (sR)^{2d(p-1)} R^{-d}.
\end{aligned}
\eqno{(3.10)}
$$
Remains to take $p$ such that $2d(p-1)=\ve$.
Then
$$
(3.10) < \Big(\frac c{p-1}\Big)^s R^{-d+\ve}< s^d \big(\frac c{p-1}\Big)^s |x_0- x_s|^{-d+\ve}
$$
proving (3.2).
\end{proof}

\section
{Use of the randomness}
Returning to (2.5), the randomness will allow us to further improve the pointwise bounds on $\langle b(KP^\bot b )^n\rangle$.

Write
$$
b(Kp^\bot b)^n (x_0, x_n)=\sum_{x_1, \ldots, x_{n-1}\in\mathbb Z^d} b_{x_0} K(x_0, x_1)P^\bot b(x_1) K(x_1, x_2)P^\bot b(x_2)\ldots b(x_n).
\eqno{(4.1)}
$$
Note that evaluation of $\langle b(KP^\bot b)^n\rangle$ by summation over all diagrams would produce combinatorial factors growing more
rapidly than $C^n$ and hence we need to proceed differently.

Let again $R\gg 1$ and $0\leq j_0<n$ s.t.
$$
|x_{j_0} - x_{j_0+1}|\sim R \ \text { and } \ \max_{0\leq j< n} |x_j-x_{j+1}|\lesssim R.
\eqno{(4.2)}
$$
We denote
$$
S=\Big\{\begin{matrix}&(x_1, \ldots, x_{n-1})\in (\mathbb Z^d)^{n-1} \ \text { and } \ \{x_0, \ldots, x_{j_0}\}\cap \{x_{j_0+1}, \ldots, x_n\}\not= \phi\\
&\text {\rm  subject to (4.2)} \hfill \end{matrix} \Big\}
\eqno{(4.3)}
$$
Using the lace expansion terminology, $\mathbb E[(4.1)]$ only involves the irreducible graphs in (4.1), due to the presence of the projection
operators $P^\bot$ (this is the only place where we refer to the lace expansion which by itself seems inadequate to evaluate $\mathbb E[(4.1)]$
because of the role of cancellations).
From the preceding, it follows in particular that
$$
\mathbb E[(4.1)]=\mathbb E[(4.4)]
$$
defining
$$
(4.4) =\sum_{(x_1, \ldots, x_{n-1})\in S} b_{x_0} K(x_0, x_1) P^\bot b_{x_1} \ldots b_{x_n}.
$$
Our goal is to prove

\begin{lemma}
For all $\ve>0$, we have
$$
|\mathbb E[(4.4)]|<C^n_\ve |x_0-x_n|^{-2d+\ve}\eqno{(4.5)}
$$
\end{lemma}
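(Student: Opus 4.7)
The plan is to exploit the irreducibility constraint encoded in $S$ to gain an extra factor $|x_0-x_n|^{-d}$ over the direct Lemma~1 bound. Membership in $S$ forces the existence of indices $0\le i\le j_0<k\le n$ with $x_i=x_k$. A union bound over the possible ``pinning pair'' $(i,k)$ replaces the constraint $(\cdot)\in S$ by the single identification $x_i=x_k$, at the cost of a factor $\le n^2$ which is absorbed into the final $C_\ve^n$. Thus, for each fixed $j_0$, each $(i,k)$, and each dyadic $R\gtrsim|x_0-x_n|/n$, it suffices to bound
$$
\Sigma_{j_0,i,k,R}\;=\;\sum_{y\in\mathbb Z^d}\;\sum_{\substack{x_1,\ldots,x_{n-1}\\ x_i=x_k=y,\,(4.2)}}\bigl|\mathbb E[b_{x_0}\cdots b_{x_n}]\bigr|\prod_{j=1}^n|K(x_{j-1},x_j)|.
$$
The only use of the randomness here is the trivial bound $|\mathbb E[b_{x_0}\cdots b_{x_n}]|\le(C\delta)^{n+1}$, which is absorbed into $C_\ve^n$; the deeper structural role of the projectors $P^\bot$ has already been exploited to pass from $(4.1)$ to $(4.4)$.

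Fixing $y=x_i=x_k$ makes the product of kernels factor as $T_1(x_0,y)\,T_2(y,y)\,T_3(y,x_n)$, where $T_1$ is the restricted chain $\sum K(x_0,x_1)\cdots K(x_{i-1},y)$ (with every step $\lesssim R$), $T_3$ is the analogous chain from $y$ to $x_n$, and $T_2(y,y)$ is the closed loop running from $y$ through $x_{i+1},\ldots,x_{k-1}$ back to $y$, containing the distinguished big step $|x_{j_0}-x_{j_0+1}|\sim R$. Lemma~1, applied to the starred versions which are pointwise dominated by the unrestricted chains, yields
$$
|T_1(x_0,y)|\le C_\ve^i|x_0-y|^{-d+\ve},\qquad |T_3(y,x_n)|\le C_\ve^{n-k}|y-x_n|^{-d+\ve}.
$$
For the loop $T_2(y,y)$ I would re-run the proof of Lemma~1 with the big step extracted exactly as in $(3.9)$--$(3.10)$, with $x_{j_0},x_{j_0+1}$ playing the role of the pair $x_i,x_{i+1}$ there. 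This yields the scale-dependent bound
$$
|T_2(y,y)|\le C_\ve^{k-i}\,R^{-d+\ve}.
$$

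Assembling the three factors and integrating out $y$ via the standard fractional convolution estimate on $\mathbb Z^d$,
$$
\sum_{y\in\mathbb Z^d}\bigl(|x_0-y|+1\bigr)^{-d+\ve}\bigl(|y-x_n|+1\bigr)^{-d+\ve}\lesssim|x_0-x_n|^{-d+2\ve}\qquad(\ve<d/2),
$$
gives $\Sigma_{j_0,i,k,R}\le C_\ve^n\,R^{-d+\ve}\,|x_0-x_n|^{-d+2\ve}$. The dyadic sum over $R\gtrsim|x_0-x_n|/n$ contributes $n^{d-\ve}|x_0-x_n|^{-d+\ve}$, and the further sum over $(j_0,i,k)$ contributes at most $n^3$. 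All polynomial $n$-losses are absorbed into $C_\ve^n$, producing $|\mathbb E[(4.4)]|\le C_\ve^n|x_0-x_n|^{-2d+3\ve}$, which is $(4.5)$ after relabelling $\ve$.

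The main obstacle is the pointwise estimate on the loop kernel $T_2(y,y)$: this is not literally Lemma~1 (one needs matching endpoints together with an internal big-step constraint), but the same H\"older and $L^p$-boundedness argument that produced $(3.10)$ goes through without modification. Everything else reduces to standard convolution arithmetic on $\mathbb Z^d$, and \emph{no finer moment cancellations of the $\sigma_x$'s are used}: the single irreducibility coincidence $x_i=x_k$ forced by $S$ is enough to upgrade the $|x_0-x_n|^{-d+\ve}$ bound of Lemma~1 to $|x_0-x_n|^{-2d+\ve}$.
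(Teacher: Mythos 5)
Your proposal has a genuine gap, and it is located precisely at the point where the paper has to work hardest. To justify the union bound over the pinning pair $(i,k)$ you move the absolute value inside the $\bar x$--sum, so that the summand in $\Sigma_{j_0,i,k,R}$ involves $\prod_j|K(x_{j-1},x_j)|$ rather than $\prod_j K(x_{j-1},x_j)$. But Lemma~1 is built on the $L^p$--boundedness (3.3) of the \emph{signed} convolution operator $K$; this is a cancellation phenomenon, not a size estimate. The positive kernel $|K|$ has divergent row sums: even with the restriction $|x_j-x_{j+1}|\lesssim R$ from (4.2), one has $\sum_{|z|\lesssim R}|K(z)|\sim\log R$, so $|K|$ is not bounded on any $\ell^p$ and Lemma~1 simply does not apply to chains of $|K|$. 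As a result the factorization step is invalid in the form you need it: with $T_1,T_2,T_3$ denoting the \emph{signed} partial chains, the correct relation is the useless one
$$
\Sigma_{j_0,i,k,R}\ \ge\ \sum_y|T_1(x_0,y)|\,|T_2(y,y)|\,|T_3(y,x_n)|,
$$
not an equality or an upper bound. If instead you estimate $\Sigma_{j_0,i,k,R}$ directly with the positive kernel, each intermediate sum costs a $\log R$, producing an overall factor $\sim(\log R)^{\,n}$; this is not dominated by $C_\ve^n R^{\ve}$ uniformly in $n$ and $R$ (take $\log R\sim n/\ve$), so the dyadic sum over $R$ does not close.

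The paper is structured exactly to avoid this trap. Bourgain proves the scale--localized bound (4.8) for the \emph{signed} quantity (4.7) (note the remark ``thus without taking expectation''), so the full cancellation in each $K$ factor is retained and Lemma~1 is legitimately used via (4.10). The price of keeping the signs is that the decomposition of $S$ into the $S_{j_1,j_2}$ can no longer be handled by a crude union bound: the sets overlap and one cannot overcount a signed sum. Hence the disjointification (4.11), which in turn introduces the \emph{nonintersection} constraints (4.12); those are what the Steinhaus variables and the Markov brothers' inequality are there to implement, at cost $C^n$, while preserving the signed operator structure. In short, you traded the disjointification issue for a union bound, but that trade forfeits the $L^p$ cancellation Lemma~1 depends on, which is the whole difficulty. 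The loop estimate $|T_2(y,y)|\lesssim C^{k-i}R^{-d+\ve}$ you propose is plausible for the signed chain by rerunning the proof of Lemma~1 with the big step pinned to scale $R$ (it is essentially (4.8) in disguise); the problem is entirely upstream, in the passage to $\prod|K|$.
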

which clearly implies the Theorem.

For definition (4.3)
$$
S= \bigcup_{\substack {0\leq j_1\leq j_0\\ j_0<j_2\leq n}} S_{j_1, j_2}
$$
where
$$
S_{j_1, j_2}= \{(x_1,\ldots, x_{n-1})\in (\mathbb Z^d)^{n-1} \text { subject to (4.2) and } x_{j_1}=x_{j_2}\}.
\eqno{(4.6)}
$$
Note that these sets $S_{j_1, j_2}$ are not disjoint and we will show later how to make them disjoint at the cost
of another factor $C^n$.

Consider the sum
$$
\sum_{(x_1, \ldots, x_{n-1})\in S_{j_1, j_2}} b_{x_0} K(x_0, x_1) P^\bot b_{x_1} \cdots b_{x_n}= (4.7).
$$
We claim that for all $\ve>0$
$$
|(4.7)|< C_\ve^n R^{-d+4\ve} |x_0-x_n|^{-d}
\eqno{(4.8)}
$$
(thus without taking expectation).

To prove (4.8), factor (4.7) as
$$
\begin{aligned}
&(KP^\bot b)^{j_1}(x_0, x_{j_1})(KP^\bot b)^{j_0-j_1}(x_{j_1}, x_{j_0}) K(x_{j_0}, x_{j_0+1}) P^\bot b_{x_{j_0+1}},\\[4pt]
& (KP^\bot b)^{j_2-j_0} (x_{j_0+1}, x_{j_1}) (KP^\bot b)(x_{j_1}, x_n)
\end{aligned}
\eqno{(4.9)}
$$
with summation over $x_{j_0}, x_{j_0+1}, x_{j_1}$.

Using the deterministic bound implied by Lemma 1
$$
|(KP^\bot b)^\ell (x, y)|< C_\ve^\ell |x-y|^{-d+\ve}
\eqno{(4.10)}
$$
we may indeed estimate
$$
\begin {aligned}
|(4.7)| &< R^{-d} C_\ve^n \sum_{x_{j_0}, x_{j_0+1}, x_{j_1}} |x_0-x_{j_1}|^{-d+\ve} |x_{j_1}-x_{j_0}|^{-d+\ve} |x_{j_0+1} -x_{j_1}|^{-d+\ve}
|x_{j_1}-x_n|^{-d+\ve}<\\[12 pt]
&< C_\ve^n R^{-d+4\ve} |x_0-x_n|^{-d}.
\end{aligned}
$$
Remains the disjointification issue for the sets $S_{j_1, j_2}$.

Our devise to achieve this may have an independent interest.  Define the disjoint sets
$$
S_{j_1, j_2}' =S_{j_1, j_2} \Big\backslash  \ \Big(\bigcup_{\substack{j<j_1\\ j_0< j'\leq n}} S_{j, j'} \ \cup \ \bigcup_{j_0< j'<j_2} S_{j_1, j'}\Big).
\eqno{(4.11)} 
$$ 
Replacing $S_{j_1, j_2}$ by $S_{j_1, j_2}'$ in (4.7), we prove that the bound (4.8) is still valid.

Note that, by definition, $(x_1, \ldots, x_{n-1})\not\in \operatornamewithlimits\bigcup\limits_{\substack{j<j_1\\ j_0<j'\leq n}} S_{j. j'}$ means that
$$
\{x_0, \ldots, x_{j_1-1}\}  \cap \{x_{j_0+1}, \dots, x_n\}=\phi.
\eqno{(4.12)}
$$
Thus we need to implement the condition (4.12) in the summation (4.7) at the cost of a factor bounded by $C^n$.

We introduce an additional set of variables $\bar\theta =(\theta_x)_{x\in\mathbb Z^d}, \theta_x\in \mathbb T=\mathbb R/\mathbb Z$
and consider the corresponding Steinhaus system.
Denote $E=\{0, 1, \ldots, j_1-1\}$, $F=\{j_{0}+ 1, \ldots, n\}$.
Replace in (4.7)
$$
\begin{cases} b_{x_j} \ \text { by } \ b_{x_j} e^{i\theta_{x_j}} \ \text { for } \ j\in E\\
b_{x_j} \ \text { by } \ b_{x_j} e^{-i\theta_{x_j}} \ \text { for } \ j\in F.
\end{cases}
\eqno{(4.13)}
$$
After this replacement, (4.7) becomes a Steinhaus polynomial in $\bar\theta$, i.e. we obtain
$$
\sum_{(x_1, \ldots, x_{n-1})\in S_{j_1, j_2}}  \ e^{i(\sum_{j\in E} \theta_{x_j}-\sum_{k\in F} \theta_{x_k})}
b_{x_0} K(x_0, x_1) P^\bot b_{x_1}\ldots b_{x_n}= (4.14)
$$
for which the estimate (4.8) still holds (uniformly in $\bar\theta$).

Next, performing a Poisson convolution in each $\theta_x$ (which is a contraction), gives
$$
\begin{aligned}
&\int(4.14) \prod_x P_t(\theta_x'-\theta_x) d\theta_x=\\
&\sum_{(x_1, \ldots, x_{n-1})\in S_{j_1, j_2}} t^{w_{\bar x}} e^{i(\sum_{j\in E}\theta_{x_j} -\sum_{k\in F} \theta_{x_k})}
b_{x_0} K(x_0, x_1)P^\bot\cdots b_{x_n}
\end{aligned}
\eqno{(4.15)}
$$
where $0\leq t\leq 1$ and
$$
w_{\bar x} =\sum_x \big| \, |\{j\in E; x_j=x\}| - |\{k\in F; x_k =x\}|\big| \leq |E|+|F|=D.
$$
Note that the condition $\{x_j, j\in E\}\cap \{x_k; k\in F\}=\phi$ is equivalent to $w_{\bar x}=D$ and (4.14) obtained by projection of (4.15),
viewed as polynomial $t$, on the top degree term.
Our argument is then concluded by the standard Markov brothers' inequality.

\begin{lemma}
Let $P(t)$ be a polynomial of degree $\leq D$.
Then
$$
\max_{-1\leq t\leq 1} |P^{(k)} (t)|\leq\frac {D^2(D^2-1^2)(D^2-2^2)\cdots (D^2-(k-1)^2)}{1, 3, 5\ldots (2D-1)} \max_{-1\leq t\leq 1}|P(t)|.
\eqno{(4.16)}
$$
\end{lemma}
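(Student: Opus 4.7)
The plan is to follow V.~Markov's classical argument, which identifies the Chebyshev polynomial $T_D(t)=\cos(D\arccos t)$ as the extremizer. After normalizing $\max_{[-1,1]}|P|=1$, the first step is the Chebyshev extremal property: any $P$ of degree $\leq D$ bounded by $1$ on $[-1,1]$ satisfies $|P(t)|\leq|T_D(t)|$ for $|t|\geq 1$. This follows by comparing $P$ with $\lambda T_D$ for an appropriate $\lambda$ and counting sign changes at the $D+1$ equioscillation points of $T_D$ on $[-1,1]$, contradicting the degree bound otherwise.

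Next I would compute $T_D^{(k)}(1)$ in closed form. Differentiating the identity $T_D(\cos\theta)=\cos(D\theta)$ $k$ times and sending $\theta\to 0$, or equivalently iterating the Chebyshev ODE $(1-t^2)T_D''-tT_D'+D^2T_D=0$, yields
$$
T_D^{(k)}(1)=\frac{D^2(D^2-1^2)\cdots(D^2-(k-1)^2)}{1\cdot 3\cdot 5\cdots(2k-1)},
$$
matching the numerical constant asserted in (4.16).

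For an interior point $t_0\in(-1,1)$ the bound on $|P^{(k)}(t_0)|$ can be derived by Cauchy's formula on a Bernstein ellipse $\mathcal{E}_\rho$ with foci $\pm 1$,
$$
P^{(k)}(t_0)=\frac{k!}{2\pi i}\oint_{\mathcal{E}_\rho}\frac{P(z)}{(z-t_0)^{k+1}}\,dz,
$$
combined with the pointwise majorization $|P(z)|\leq |T_D(z)|$ on $\mathcal{E}_\rho$ coming from the first step, optimizing over $\rho>1$; alternatively, one shows directly that $\max_{[-1,1]}|T_D^{(k)}|=T_D^{(k)}(1)$ and reduces the general estimate to this extremal identity.

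The main obstacle is precisely this last sharpness reduction: showing that the maximum of $|P^{(k)}|$ on $[-1,1]$, taken over $P$ with $\|P\|_\infty\leq 1$ and $\deg P\leq D$, is attained by $T_D$ at the endpoint $t=1$. This is V.~Markov's deepest step and requires more than Chebyshev extremality at a single point. One route is to observe that $T_D^{(k)}$ itself satisfies a linear second-order ODE whose solutions on $[-1,1]$ attain their maximum at $t=\pm 1$, and then to argue that any hypothetical competitor with strictly larger interior $k$-th derivative would, upon subtraction against a suitably scaled $T_D$, accumulate too many alternations to remain a polynomial of degree $D$. Once this is in place, (4.16) follows uniformly on $[-1,1]$.
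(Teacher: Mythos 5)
The paper offers no proof of this lemma at all: it is invoked as ``the standard Markov brothers' inequality'' (V.~A.~Markov, 1892) and left as a citation of classical fact, so there is no in-paper argument to compare yours against. In the application one only needs $k=D$, where $T_D^{(D)}(1)=D!\,2^{D-1}$ and the inequality reduces to the elementary Chebyshev bound $|a_D|\le 2^{D-1}\max_{[-1,1]}|P|$ on the leading coefficient; incidentally this also shows the paper's denominator is a typo --- it should read $1\cdot 3\cdots(2k-1)$, as you write, not $(2D-1)$ (the two coincide precisely when $k=D$, which is why the slip is harmless here).

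Your outline does correctly frame the general case: $T_D$ is the extremizer, the value $T_D^{(k)}(1)=\frac{D^2(D^2-1)\cdots(D^2-(k-1)^2)}{(2k-1)!!}$ is right, and you correctly flag the interior-point reduction as V.~Markov's genuinely hard step. However, as written, that step is not actually carried out, and the Bernstein-ellipse/Cauchy-integral suggestion will not close it: on a Bernstein ellipse $\mathcal{E}_\rho$ the pointwise comparison $|P(z)|\le|T_D(z)|$ is false (what one has is $\max_{\mathcal{E}_\rho}|P|\le\rho^D$, which exceeds $\max_{\mathcal{E}_\rho}|T_D|=\tfrac12(\rho^D+\rho^{-D})$), and optimizing the Cauchy estimate over $\rho$ produces Bernstein-type bounds with the wrong constant and the wrong endpoint behavior, not V.~Markov's sharp $T_D^{(k)}(1)$. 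The alternation/ODE argument you gesture at in the last paragraph is the right idea but is only a heuristic here. For the narrow purpose of this paper you could bypass all of this and prove only the $k=D$ case directly (leading coefficient bound via the equioscillation of $T_D$), which is entirely elementary and is all that is used in Section 4.
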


Indeed, we conclude that for all $\bar\theta$
$$
\Big|\sum_{\substack{(x_1, \ldots, x_{n-1})\in S_{j_1, j_2}\\ w_{\bar x}=D}} e^{i(\sum_{j\in E}\theta_{x_j} -\sum_{k\in F}\theta_{x_k})} b_{x_0} K(x_0, x_1)P^\bot \ldots b_{x_n}\Big|
<C^n.(4.8)
$$
and set then $\bar\theta=0$.

\end{document}